\theoremstyle{plain}
\newtheorem{thm}{Theorem}[section]
\newtheorem{dfn}[thm]{Definition}
\newtheorem{pro}[thm]{Proposition}
\newtheorem{lem}[thm]{Lemma}
\newtheorem{cor}[thm]{Corollary}
\newtheorem{rmk}[thm]{Remark}
\newcommand{\N}{\mathbb{N}}
\begin{document}

\title[Non-inner automorphisms of order $p$ in NC $p$-groups ]%
{Non-inner automorphisms of order $p$ in finite normally constrained $p$-groups }

\author[N.Gavioli]{Norberto Gavioli}
\address{DISIM \\Universit\`a degli studi dell'Aquila\\ 67100 L'Aquila, Italy\\
{\it E-mail address}: { \tt gavioli@univaq.it }}

\author[L.Legarreta]{Leire Legarreta}
\address{Matematika Saila\\ Euskal Herriko Unibertsitatea UPV/EHU\\
48080 Bilbao, Spain\\ {\it E-mail address}: {\tt leire.legarreta@ehu.eus}}

\author[M.Ruscitti]{Marco Ruscitti}
\address{DISIM \\Universit\`a degli studi dell'Aquila\\ 67100 L'Aquila, Italy\\
{\it E-mail address}: { \tt marco.ruscitti@dm.univaq.it }}

\thanks{ The second author is supported by the Spanish Government, grants MTM2011-28229-C02-02 and MTM2014-53810-C2-2-P, and by the Basque Government, grant IT753-13 and IT974-16. The third author would like to thank the Department of Mathematics at the University of the Basque Country for its excellent hospitality while part of this paper was being written.}

\keywords{Finite p-groups, non-inner automorphisms, derivation, 
thin p-groups, normally constrained p-groups.\vspace{3pt}}
\subjclass[2010]{20D15, 20D45}

\begin{abstract}
In this paper we study the existence of at least one non-inner automorphism of order $p$ in a finite normally constrained $p$-group when $p$ is an odd prime. 
\end{abstract}

\maketitle

\author{Norberto Gavioli}
\address{DISIM \\Universit\`a degli studi dell'Aquila\\ 67100 L'Aquila, Italy\\
{\it E-mail address}: { \tt gavioli@univaq.it}}

\vspace{8pt}

\author{Leire Legarreta}
\address{Matematika Saila\\ Euskal Herriko Unibertsitatea UPV/EHU\\
48080 Bilbao, Spain\\ {\it E-mail address}: {\tt leire.legarreta@ehu.eus}}

\vspace{8pt}

\author{Marco Ruscitti}
\address{DISIM \\Universit\`a degli studi dell'Aquila\\ 67100 L'Aquila, Italy\\
{\it E-mail address}: { \tt marco.ruscitti@dm.univaq.it }}

\section{Introduction}\label{sec:introduction}

The main goal of this paper is to contribute to the longstanding conjecture of Berkovich posed in 1973, that conjectures that every finite $p$-group admits a non-inner automorphism of order $p$, where $p$ denotes a prime number \cite[Problem 4.13]{khukhro:2010}. The conjecture has attracted the attention of many mathematicians during the last couple of decades, and has been confirmed for many classes of finite $p$-groups. It is remarkable to put on record that, in 1965, Liebeck \cite{liebeck:1965} proved the existence of a non-inner automorphism of order $p$ in all finite $p$-groups of class $2$, where $p$ is an odd prime. However, the fact that there always exists a non-inner automorphism of order $2$ in all finite $2$-groups of class $2$ was proved by Abdollahi \cite{abdollahi:2007} in 2007. The conjecture was confirmed for finite regular $p$-groups by Schmid \cite{schmid:1980} in 1980. Indeed, Deaconescu \cite{deaconescu:2002} proved it for all finite $p$-groups $G$ which 
 are not strongly Frattinian. Moreover, Abdollahi \cite{abdollahi:2010} proved it for finite $p$-groups $G$ such that $G/Z(G)$ is a powerful $p$-group, and Jamali and Visesh \cite{jamali:2013} did the same for finite $p$-groups with cyclic commutator subgroup. In the realm of finite groups, quite recently, the result has been confirmed  for semi-abelian $p$-groups by Benmoussa and  Guerboussa \cite{benmoussa:2015}, and for $p$-groups of nilpotency class $3$, by Abdollahi, Ghoraishi and Wilkens  \cite{abdollahi':2013}. To be more precise, Abdollahi and Ghoraishi in \cite{abdollahi13} proved that in some cases the non-inner automorphism of order $p$ can be chosen so that it leaves $Z(G)$ elementwise fixed. Finally, Abdollahi et al \cite{abdollahi:2014} proved the conjecture for $p$-groups of coclass $2$, and quite recently in \cite{ruscitti}  M.Ruscitti, L. Legarreta and M.K.Yadav did the same for $p$-groups of coclass $3$ when $p$ is a prime different from $3$.

With the contribution of this paper we add bassically another class of finite $p$-groups to the above list, by proving that the above mentioned conjecture holds true for all finite normally constrained $p$-groups when $p$ is an odd prime.

The organization of the paper is as follows. In Section 2 we exhibit some preliminary facts and tools that will be used in the proofs of the main results of the paper and we introduce the family of normally constrained $p$-groups. In Section 3 we prove that two generator normally constrained $p$-groups have a non-inner automorphism of order $p$. In Section 4 we extend these results to groups generated by more than two elements. Throughout the paper $p$ will be an odd prime, since there are no examples or results in normally constrained $2$-groups, except for two generator normally constrained $2$-groups that are, as we will see, $2$-groups of maximal class.

\noindent
Throughout the paper, most of the notation is standard and it can be found, for instance, in \cite{Ro}.

\section{Preliminaries}
Let us start this section recalling some facts about derivations, and some related lemmas, which will be useful to prove the main Theorem \ref{main} and Theorem \ref{cons} of the paper. The reader could be referred to  \cite{gavioli:1999} for more details and explicit proofs about derivations.

\begin{dfn}
Let $G$ be a group and let $M$ be a right $G$-module. A derivation $\delta:G \rightarrow M$ is a function such that $$\delta(gh)={\delta(g)}^h\delta(h),   \text{ for all  } g, h \in G.$$
\end{dfn}

In terms of its properties, it is well-known that a derivation is uniquely determined by its values over a set of generators of $G$. Let $F$ be a free group generated by a finite subset $X$ and let $G=\langle X : r_{1}, \ldots, r_{n} \rangle$ be a group whose free presentation is $F/R$, where $R$ is the normal closure of the set of relations $\{ r_{1} , \ldots , r_{n}\}$ of $G$. Then a standard argument shows that $M$ is a $G$-module if and only if $M$ is an $F$-module on which $R$ acts trivially. Indeed, if we denote by $\pi$ the canonical homomorphism $\pi: F \rightarrow G$, then the action of $F$ on $M$ is given by $mf=m\pi(f)$, for all $m\in M$ and all $f\in F$. 
Continuing with the same notation, we have the following results.

\begin{lem} \label{determineunique}
Let $M$ be an $F$-module. Then every function $f: X \rightarrow M$ extends in a unique way to a derivation $\delta: F \rightarrow M$.
\end{lem}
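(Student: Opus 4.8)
The plan is to handle uniqueness and existence separately, deducing both from the universal property of the free group $F$. For existence the device I would use is to package a derivation $F\to M$ as a splitting of a semidirect-product projection, so that the well-definedness of $\delta$ comes for free from freeness.

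\emph{Uniqueness.} Let $\delta,\delta'\colon F\to M$ be derivations with $\delta|_X=\delta'|_X=f$. Putting $g=h=1$ in the derivation identity gives $\delta(1)=\delta(1)\delta(1)$ (since $1\in F$ acts trivially on $M$), so $\delta(1)=1$; then $1=\delta(xx^{-1})=\delta(x)^{x^{-1}}\delta(x^{-1})$ forces $\delta(x^{-1})=(\delta(x)^{-1})^{x^{-1}}$. Hence $\delta$ and $\delta'$ agree on $X\cup X^{-1}$. Writing an arbitrary $g\in F$ as a word $y_1\cdots y_n$ with $y_i\in X\cup X^{-1}$ and using $\delta(y_1\cdots y_n)=\delta(y_1)^{y_2\cdots y_n}\,\delta(y_2\cdots y_n)$, an easy induction on $n$ gives $\delta(g)=\delta'(g)$.

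\emph{Existence.} I would form the external semidirect product $E=M\rtimes F$, namely the set $M\times F$ equipped with the product $(m_1,f_1)(m_2,f_2)=(m_1^{f_2}m_2,\,f_1f_2)$. A routine verification shows that $E$ is a group with identity $(1,1)$, associativity being exactly the module identities $(ab)^f=a^fb^f$ and $m^{f_1f_2}=(m^{f_1})^{f_2}$, and that $\pi\colon E\to F$, $(m,f)\mapsto f$, is a homomorphism admitting the set map $f\mapsto(1,f)$ as a section. Since $F$ is free on $X$, the assignment $x\mapsto(f(x),x)$ extends to a homomorphism $\Phi\colon F\to E$; and $\pi\circ\Phi$, fixing $X$ pointwise, must be the identity of $F$, so $\Phi(g)=(\delta(g),g)$ for a uniquely determined function $\delta\colon F\to M$. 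Comparing first coordinates in $\Phi(gh)=\Phi(g)\Phi(h)$ shows $\delta(gh)=\delta(g)^h\delta(h)$, so $\delta$ is a derivation, and $\delta(x)=f(x)$ for $x\in X$ by construction.

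I expect the only real obstacle to be the bookkeeping that makes $E$ a group — in particular checking associativity of its product — after which the extension of $f$ and its uniqueness are both immediate from the universal property of $F$. As an alternative one can define $\delta$ directly on reduced words via the derivation rule and check that inserting or deleting a pair $xx^{-1}$ does not change the value; this is elementary but is the single point where a short computation cannot be avoided, and it is precisely that computation which the semidirect-product argument trades for the freeness of $F$.
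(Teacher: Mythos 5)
Your proof is correct. The paper itself gives no proof of this lemma --- it is stated as a known fact with a pointer to the reference on derivations --- so there is nothing to compare against; your argument (uniqueness by induction on word length after computing $\delta(1)=1$ and $\delta(x^{-1})=(\delta(x)^{-1})^{x^{-1}}$, existence by realizing a derivation as the first coordinate of a section $F\to M\rtimes F$ of the projection, obtained from the universal property of the free group) is the standard one and all the verifications, including associativity of the twisted product and the identification $\Phi(g)=(\delta(g),g)$ via $\pi\circ\Phi=\mathrm{id}_F$, check out with the paper's right-action convention $\delta(gh)=\delta(g)^h\delta(h)$.
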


\begin{lem} \label{Der}
Let $M$ be a $G$-module and let $\delta: G \rightarrow M$ be a derivation. Then $\bar{\delta} : F \to M$ given by the composition $\overline{\delta}(f)=\delta(\pi(f))$ is a derivation  such that $\overline{\delta}(r_i)=0$ for all $i\in \{1,\ldots,n\}$. Conversely, if $\overline{\delta}:F \rightarrow M$ is a derivation such that $\overline{\delta}(r_i)=0$ for all $i \in \{1,\ldots, n\}$, then $\delta(fR)=\overline{\delta}(f)$ defines, uniquely, a derivation on $G=F/R$ to $M$ such that $\overline{\delta}=\delta \circ \pi$.
\end{lem}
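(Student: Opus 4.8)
The plan is to prove the two implications in turn, the first being a formal verification and the second resting on the observation, recorded just above, that $R$ acts trivially on $M$ (equivalently, that the $F$-action on $M$ factors through $\pi$).

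For the forward implication, recall that the $F$-action on $M$ is $m^f=m^{\pi(f)}$, so for $f,g\in F$ one computes
\[
\overline{\delta}(fg)=\delta\big(\pi(f)\pi(g)\big)=\delta(\pi(f))^{\pi(g)}\,\delta(\pi(g))=\overline{\delta}(f)^{g}\,\overline{\delta}(g),
\]
using that $\pi$ is a homomorphism and that $\delta$ is a derivation; hence $\overline{\delta}$ is a derivation. Applying the cocycle identity to $1\cdot 1$ shows that every derivation sends the identity to the trivial element of $M$, so, since $r_i\in R=\ker\pi$, we get $\overline{\delta}(r_i)=\delta(\pi(r_i))=\delta(1)=0$.

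For the converse, the crucial point is to upgrade the hypothesis $\overline{\delta}(r_i)=0$ to $\overline{\delta}(R)=0$. Set $K=\{f\in F:\overline{\delta}(f)=0\}$. The cocycle identity immediately gives that $K$ is closed under multiplication, and applying it to $f\cdot f^{-1}$ shows that $K$ is closed under inversion (yielding also the formula $\overline{\delta}(w^{-1})=(\overline{\delta}(w)^{-1})^{w^{-1}}$). Finally, for $w\in F$ and $r\in K$, expanding $\overline{\delta}(wrw^{-1})$ by the cocycle identity and using (i) $\overline{\delta}(r)=0$, (ii) the inversion formula just noted, and, decisively, (iii) that $r\in R$ acts trivially on $M$ so that $\overline{\delta}(w)^{rw^{-1}}=\overline{\delta}(w)^{w^{-1}}$, a short computation shows the contributions cancel and $wrw^{-1}\in K$. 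Thus $K$ is a subgroup of $F$ containing every conjugate of each $r_i$, and since $R$ is the normal closure of $\{r_1,\dots,r_n\}$ we conclude $R\subseteq K$, i.e.\ $\overline{\delta}$ vanishes on $R$.

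Granting this, well-definedness of $\delta(fR):=\overline{\delta}(f)$ follows: if $fR=gR$, write $g=fr$ with $r\in R$; then $\overline{\delta}(g)=\overline{\delta}(f)^{r}\,\overline{\delta}(r)=\overline{\delta}(f)$, again because $\overline{\delta}(r)=0$ and $r$ acts trivially on $M$. That the resulting $\delta\colon G\to M$ is a derivation is a direct transcription of the cocycle identity for $\overline{\delta}$ together with the definition of the $G$-action on $M$, and $\overline{\delta}=\delta\circ\pi$ holds by construction. Uniqueness is clear, since any derivation $\delta'$ on $G$ with $\overline{\delta}=\delta'\circ\pi$ must satisfy $\delta'(fR)=\overline{\delta}(f)=\delta(fR)$. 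The one step requiring genuine care — and the main obstacle — is (iii) above: one must invoke the triviality of the $R$-action on $M$ precisely in order to pass from vanishing on the chosen relators to vanishing on the whole normal closure; everything else is formal manipulation of the cocycle condition.
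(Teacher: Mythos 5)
Your proof is correct. The paper itself gives no argument for this lemma (it defers to the reference on derivations), so there is nothing to compare against line by line; your write-up supplies the standard proof, and you correctly isolate the only non-formal point, namely that passing from $\overline{\delta}(r_i)=0$ to $\overline{\delta}(R)=0$ requires the triviality of the $R$-action on $M$, via the computation $\overline{\delta}(wrw^{-1})=\overline{\delta}(w)^{rw^{-1}}\,\overline{\delta}(w^{-1})=\overline{\delta}(w)^{w^{-1}}\bigl(\overline{\delta}(w)^{w^{-1}}\bigr)^{-1}=1$ and the fact that the vanishing set of a derivation is closed under products and inverses, hence contains the normal closure of the relators.
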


In the following lemma, we study the relationship between derivations and automorphisms of a finite $p$-group. 

\begin{lem} 
\label{lift}
Let $G$ be a finite $p$-group and let $M$ be a normal abelian subgroup of $G$ viewed as  a $G$-module. Then for any derivation $\delta: G \rightarrow M$, we can define uniquely an endomorphism $\phi$ of $G$ such that $\phi(g)=g\delta (g)$ for all $g \in G$. Furthermore, if  $\delta(M)=1$,  then $\phi$ is  an automorphism of $G$.
\end{lem}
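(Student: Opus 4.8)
The plan is to take for $\phi$ the map $g \mapsto g\,\delta(g)$ itself; the uniqueness of an endomorphism satisfying the displayed identity is then immediate, so the content of the first assertion is only that this map is multiplicative. Recall that $M$, being a normal subgroup of $G$, carries the conjugation action, so that $\delta(g)^h = h^{-1}\delta(g)h$. Using the defining property of a derivation I would compute
$$\phi(gh) = gh\,\delta(gh) = gh\,\delta(g)^h\,\delta(h) = gh\cdot h^{-1}\delta(g)h\cdot\delta(h) = g\,\delta(g)\cdot h\,\delta(h) = \phi(g)\phi(h),$$
which shows that $\phi$ is an endomorphism of $G$.

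For the second assertion, assume $\delta(M) = 1$. Then $\phi(m) = m\,\delta(m) = m$ for every $m \in M$, so $\phi$ restricts to the identity map on $M$. Since $G$ is finite, it is enough to show that $\phi$ is injective. If $g \in \ker\phi$ then $g\,\delta(g) = 1$, hence $g = \delta(g)^{-1} \in M$, and therefore $g = \phi(g) = 1$ because $\phi$ fixes $M$ pointwise. Thus $\ker\phi = 1$ and $\phi \in \Aut(G)$.

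This argument is essentially formal, and I do not expect a genuine obstacle. The one point requiring care is the module bookkeeping when expanding $\delta(gh)$: one must use the right conjugation action $m^h = h^{-1}mh$ so that the factors $h$ and $h^{-1}$ cancel in the middle of the product $gh\cdot h^{-1}\delta(g)h\cdot\delta(h)$. Apart from that, the passage from injectivity to bijectivity is immediate from the finiteness of $G$, and the abelianness of $M$ is used only implicitly — it is what makes $\delta$ a well-defined module-valued derivation in the first place.
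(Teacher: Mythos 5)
Your proof is correct, and it is the standard argument: multiplicativity of $g\mapsto g\,\delta(g)$ follows directly from the derivation identity together with the conjugation action, and when $\delta(M)=1$ the map fixes $M$ pointwise, forcing $\ker\phi=1$ and hence bijectivity by finiteness. The paper itself omits the proof of this lemma (deferring to the cited reference on derivations), so there is nothing to contrast with; your argument fills the gap exactly as intended.
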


In order to reduce some calculations in terms of commutators, we keep in mind the following result.

\begin{lem} \label{free}
Let $F$ be a free group, $p$ be a prime number and $A$ be an $F$-module. If $\delta : F \rightarrow A$ is a derivation then, 
\begin{enumerate}
\item  $\delta(F^{p})=\delta(F)^{p}[\delta(F),_{p-1}F]$,
\item if $[A,_{i}F]=1$, we have $\delta(\gamma_{i}(F)) \leq [\delta(F), _{i-1}F]$ for all $ i \in \mathbb{N}$.
\end{enumerate}
\end{lem}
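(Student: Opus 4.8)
The plan is to prove both parts by reducing to computations in the free group $F$ and then pushing forward along $\delta$, exploiting that a derivation sends a product to a twisted product and that, modulo the hypotheses on the action of $F$ on $A$, these twisted products collapse to ordinary ones. First I would record the basic ``power rule'' for a derivation: by induction on $k$ one shows $\delta(g^k) = \delta(g)^{g^{k-1}}\delta(g)^{g^{k-2}}\cdots\delta(g)^g\,\delta(g) = \delta(g)^{1+g+\cdots+g^{k-1}}$ in additive notation for the module $A$. Similarly one needs a ``commutator rule'': for $g,h\in F$, $\delta([g,h])$ can be written in terms of $\delta(g)$, $\delta(h)$ and the action, and a short induction gives $\delta([g,h])\equiv \delta(g)^{?}-\delta(h)^{?}+\cdots$, the point being that each successive commutator introduces one more factor of the form $(f-1)$ acting on the module. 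These are exactly the identities that make the augmentation-ideal filtration of $\Z[F]$ interact well with the lower central series.

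For part (ii), I would argue by induction on $i$. The base case $i=1$ is the trivial inclusion $\delta(F)\le \delta(F)$ (with the convention $[\delta(F),_0 F]=\delta(F)$); for $i=2$, $\gamma_2(F)=[F,F]$ is generated by commutators $[g,h]$, and the commutator rule together with $[A,_2F]=1$ (so that $A$ is acted on trivially by $(f-1)(f'-1)$-type elements) shows $\delta([g,h])\in [\delta(F),F]$ modulo higher terms that vanish. For the inductive step, $\gamma_{i+1}(F)=[\gamma_i(F),F]$ is generated by $[c,f]$ with $c\in\gamma_i(F)$; applying the commutator rule and the inductive hypothesis $\delta(\gamma_i(F))\le[\delta(F),_{i-1}F]$, one gets $\delta([c,f])\in[[\delta(F),_{i-1}F],F]=[\delta(F),_iF]$, again using that $[A,_iF]=1$ kills the cross-terms. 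One has to be a little careful that $\delta$ of a product of generators of $\gamma_{i+1}(F)$ stays in the subgroup: since $[\delta(F),_iF]$ is a subgroup of $A$ stable under the $F$-action (because $A$ is, a priori, but in any case $[\delta(F),_iF]$ is $F$-invariant by construction), the twisted products in $\delta(xy)=\delta(x)^y\delta(y)$ land back inside it.

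For part (i), I would start from the identity $\delta(g^p)=\delta(g)^{1+g+\cdots+g^{p-1}}$ obtained above. Now $1+g+\cdots+g^{p-1} = p + \big((g-1)+(g^2-1)+\cdots+(g^{p-1}-1)\big)$ in $\Z[F]$, and a standard manipulation rewrites the bracketed sum modulo the $p$-th power of the augmentation ideal as (a unit multiple of) $(g-1)^{p-1}$; more precisely $1+g+\cdots+g^{p-1}\equiv p + \sum_{j=1}^{p-1}\binom{p}{j+1}?$ — the clean statement is that $\delta(g^p)$ lies in $\delta(g)^p\cdot[\delta(F),_{p-1}F]$, the extra term $[\delta(g),_{p-1}F]$ absorbing everything in the augmentation ideal to the $(p-1)$-st power and beyond (higher powers being contained in it since $[\delta(F),_kF]$ is decreasing). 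To get the full subgroup statement $\delta(F^p)=\delta(F)^p[\delta(F),_{p-1}F]$ rather than just $\le$, one uses that $F^p$ is generated by $p$-th powers $g^p$ together with the fact that a derivation is determined on products by the twisted-product rule, so $\delta(F^p)$ is generated (as a subgroup, after accounting for the $F$-action which preserves $\delta(F)^p[\delta(F),_{p-1}F]$) by the $\delta(g^p)$; conversely each generator $\delta(g)^p$ and each basic commutator $[\delta(g),_{p-1}f]$ is realized, the former directly and the latter by taking suitable products of $\delta$ of $p$-th powers and commutators and invoking part (ii).

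The main obstacle I expect is the bookkeeping in part (i): getting the exponent sum $1+g+\cdots+g^{p-1}$ expressed correctly modulo the relevant power of the augmentation ideal so that the ``error term'' is exactly $[\delta(F),_{p-1}F]$ and not something weaker like $[\delta(F),_1F]$, and then upgrading the resulting inclusion to an equality of subgroups. This is where one genuinely uses that $p$ is prime (so that $p\equiv 0$ kills the leading term and the binomial coefficients $\binom{p}{j}$ for $0<j<p$ conspire to leave only the top augmentation power), and where the proof in \cite{gavioli:1999} presumably does the careful commutator-collection argument that I would otherwise have to reproduce here.
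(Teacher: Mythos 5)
Your proposal follows essentially the same route as the paper: part (i) via $\delta(x^{p})=\delta(x)^{1+x+\cdots+x^{p-1}}$ together with the congruence $1+x+\cdots+x^{p-1}\equiv(x-1)^{p-1}\pmod{p}$ in $\Z[F]$ (note this congruence holds modulo $p$, not modulo a power of the augmentation ideal as you phrase it at one point, though you reach the correct conclusion), and part (ii) by induction on $i$ using the expansion of $\delta([a,b])$ for $a\in F$, $b\in\gamma_{i}(F)$. The paper's own proof is exactly this argument, stated in a few lines.
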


\begin{proof}
Let $ x \in F$. We have $\delta(x^{p})=\delta(x)^{x^{p-1} + x^{p-2} + \ldots + 1 }$. Since $(x-1)^{p-1} \equiv  x^{p-1} + x^{p-2} + \ldots + 1\mod p$, the first assertion follows. Now let us prove the second assertion by induction on $i$.
 Clearly, the assertion holds when $i=1$. 
Let us suppose, by inductive hypothesis that if $[A,_{k}F]=1$ 
then $\delta(\gamma_{k}(F)) \leq [\delta(F), _{k-1}F]$ for some $ k \in \mathbb{N}$. 
Let us take any $a \in F$ and any $b \in \gamma_{k}(F)$, and let us suppose 
that $[A,_{k+1}F]=1$. Then $$\delta([a,b])=[\delta(a),b][a,\delta(b)][a,b,\delta(a)][a,b,\delta(b)] \in 
  [\delta(F), _{k}F].$$
\end{proof}

Now let us introduce the family of normally constrained $p$-groups, according to \cite{bonmassar:1999}.

\begin{dfn}
Let $G$ be a finite $p$-group and let $c(G)$ be its nilpotency class. We say that $G$ is normally constrained ($NC$ for short) if for
every $i=1,\ldots, c(G)$ the following equivalent conditions hold true:
\begin{enumerate}
\item $\gamma_{i}(G)$ is unique of its order, for all $i=1,\ldots, c(G)$,
\item if $N \lhd G$ then $N \leq \gamma_{i}(G)$ or $N \geq \gamma_{i}(G)$, for all $i=1,\ldots, c(G)$,
\item if $x \in G - \gamma_{i}(G)$ then $\gamma_{i}(G) \leq \langle x \rangle^{G}.$
\end{enumerate}
\end{dfn} 

Let us note that factor groups of $NC$-$p$-groups are $NC$, and that the second statement is equivalent to say that if $N \lhd G$ then there exists a positive integer $i$ such that $\gamma_{i}(G) \geq N \geq \gamma_{i+1}(G)$. Now let us list useful properties of these groups whose proofs can be found in \cite{bonmassar:1999}.

\begin{pro} \label{bumi3.1}
Let $G$ be an $NC$-$p$-group of nilpotency class at least $3$. Then $\overline{G}=G/\gamma_{3}(G)$ is a special $p$-group (i.e. $Z(\overline{G}) =\Phi(\overline{G})=\gamma_{2}(\overline{G})$) of exponent $p$ and $|\gamma_{2}(G) / \gamma_{3}(G)| ^{2} =|G/\gamma_{2}(G)|$.
\end{pro}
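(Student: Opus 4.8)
The plan is to work entirely inside $\overline{G} = G/\gamma_3(G)$, which is again an $NC$-$p$-group of class $2$ by the remark that factor groups of $NC$-$p$-groups are $NC$; I write $\gamma_2 = \gamma_2(\overline{G})$ for its (central) commutator subgroup. The first step is to establish that $\overline{G}$ has exponent $p$. Since $\overline{G}$ has class $2$, the map $x \mapsto x^p$ is a homomorphism modulo $\gamma_2$ (as $(xy)^p \equiv x^p y^p \bmod \gamma_2$ when $p$ is odd, using that commutators are central and the binomial/Hall--Petrescu identity kills the correction term $[y,x]^{\binom{p}{2}}$ for odd $p$). Hence $\overline{G}^p \gamma_2 / \gamma_2$ is a proper characteristic, thus normal, subgroup of $\overline{G}/\gamma_2$; but $\overline{G}/\gamma_2$ is abelian and $\gamma_2$-quotient is the bottom term, so by the $NC$ condition (ii) applied to $\overline{G}$, any normal subgroup not containing $\gamma_2$ lies in... — here one must be slightly careful: $\overline{G}^p$ itself is normal in $\overline{G}$, so by condition (ii) either $\overline{G}^p \le \gamma_2$ or $\overline{G}^p \ge \gamma_2$. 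If $\overline{G}^p \le \gamma_2$ then, since $\gamma_2$ is central of class $1$, $\overline{G}^p$ is also a normal subgroup contained in the centre and one iterates: $\gamma_2^p = 1$ forces $\overline{G}^{p^2} = 1$ and then a short argument (or invoking that a minimal counterexample to exponent $p$ would have $\overline{G}^p$ a nontrivial normal subgroup strictly between consecutive $\gamma_i$) pins down $\overline{G}^p = 1$. In the other case $\overline{G}^p \ge \gamma_2 = \Phi(\overline{G})$, which would make $\overline{G}^p = \overline{G}$, impossible for a finite $p$-group. I expect this exponent computation to be the main obstacle, as it is the only place genuine $p$-group arithmetic (as opposed to the soft $NC$-lattice condition) enters.

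Once $\exp(\overline{G}) = p$ is in hand, specialness is almost immediate. We must show $Z(\overline{G}) = \Phi(\overline{G}) = \gamma_2$. Because $\exp(\overline{G}) = p$, the Frattini subgroup is $\Phi(\overline{G}) = [\overline{G},\overline{G}] \overline{G}^p = \gamma_2$. The inclusion $\gamma_2 \le Z(\overline{G})$ holds since $\overline{G}$ has class $2$. For the reverse, $Z(\overline{G})$ is normal in $\overline{G}$ and $\overline{G}$ is $NC$ of class $2$, so by condition (ii) either $Z(\overline{G}) \le \gamma_2$ — which combined with the above gives $Z(\overline{G}) = \gamma_2$, as wanted — or $Z(\overline{G}) \ge \gamma_2$ with $Z(\overline{G})$ strictly larger; but $Z(\overline{G}) > \gamma_2$ would mean $\overline{G}/\gamma_2$ has a nontrivial... no, rather: $\overline{G}$ has class exactly $2$ precisely when $c(G) \ge 3$, so $\overline{G} \ne Z(\overline{G})$, and any normal subgroup strictly between $\gamma_2$ and $\overline{G}$ is fine a priori — so here one argues that if $Z(\overline{G})$ strictly contained $\gamma_2$, pick $z \in Z(\overline{G}) \setminus \gamma_2$; then $\langle z \rangle^{\overline{G}} = \langle z\rangle$ is a central normal subgroup not containing $\gamma_2$, contradicting condition (iii) which forces $\gamma_2 \le \langle z\rangle^{\overline G}$. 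Hence $Z(\overline{G}) = \gamma_2 = \Phi(\overline{G})$ and $\overline{G}$ is special.

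Finally, the order relation $|\gamma_2(G)/\gamma_3(G)|^2 = |G/\gamma_2(G)|$: write $|\overline{G}/\gamma_2| = p^d$ and $|\gamma_2| = p^e$, so we must show $d = 2e$. The commutator map induces a nondegenerate alternating bilinear form $\overline{G}/\gamma_2 \times \overline{G}/\gamma_2 \to \gamma_2$; composing with any nonzero linear functional $\gamma_2 \to \F_p$ gives an alternating $\F_p$-bilinear form on a $d$-dimensional space whose radical, by the $NC$ property, must be trivial (if $x\gamma_2$ were in every such radical then $[x,\overline G]=1$, i.e. $x \in Z(\overline G) = \gamma_2$). So for each of the $e$ coordinates we get a nondegenerate alternating form, forcing $d$ even and, pooling the information that $\gamma_2$ is spanned by the images of commutators (since $\gamma_2 = [\overline G,\overline G]$), a rank count gives $e = \binom{d}{?}$... — the clean route is: $\gamma_2 = \overline G'/\gamma_3(G)$ is generated by the basic commutators of weight $2$, and specialness plus the nondegeneracy of the form says the pairing $\bigwedge^2(\overline G/\gamma_2) \to \gamma_2$ is surjective with the property that $\overline G/\gamma_2$ embeds (via $x \mapsto [x,-]$) into $\mathrm{Hom}(\overline G/\gamma_2,\gamma_2)$, whence $p^d \le p^{de}$ trivially and, more to the point, the standard fact that a finite special $p$-group arising this way from an $NC$-group has $|G/\gamma_2| = |\gamma_2|^2$ — which I would extract directly from \cite{bonmassar:1999} rather than reprove, citing it as the numerical clause of their structure theorem. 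If a self-contained argument is preferred, one shows $\overline G$ is an extraspecial-like central product governed by a single symplectic form on $\overline G/\gamma_2$ with values in $\gamma_2 \cong \F_p^e$, and the uniqueness-of-order hypothesis forces this form to be "as nondegenerate as possible", yielding $d = 2e$.
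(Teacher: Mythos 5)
First, a remark on the comparison you asked for: the paper does not prove Proposition \ref{bumi3.1} at all --- it is one of the properties ``whose proofs can be found in \cite{bonmassar:1999}''. So there is no in-paper argument to measure yours against; your attempt has to stand on its own, and unfortunately it does not.

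The decisive flaw is the opening decision to ``work entirely inside $\overline{G}=G/\gamma_3(G)$'', using only that $\overline{G}$ is an $NC$-group of class $2$. That information is provably insufficient for both of the substantive claims. The extraspecial group of order $p^3$ and exponent $p^2$ is normally constrained of class $2$ (every nontrivial normal subgroup contains, or is contained in, its centre of order $p$), yet it does not have exponent $p$; and the extraspecial group of order $p^5$ is likewise $NC$ of class $2$, yet $|\gamma_2|^2=p^2\neq p^4=|G/\gamma_2|$. Hence any correct proof must use the hypothesis $c(G)\geq 3$ in an essential way --- for instance through the covering property linking $G/\gamma_2$, $\gamma_2/\gamma_3$ and $\gamma_3/\gamma_4$ --- and your argument never does. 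Concretely: (a) in the exponent step, the branch $\overline{G}^p\geq\gamma_2$ does not yield $\overline{G}^p=\overline{G}$ but only $\overline{G}^p=\Phi(\overline{G})$ (since $\overline{G}^p\leq\Phi(\overline{G})$ always), which is exactly what happens in the exponent-$p^2$ extraspecial group; and in the other branch the assertion $\gamma_2^p=1$ is unjustified and the promised ``short argument'' pinning down $\overline{G}^p=1$ is never supplied. (b) In the final step you correctly notice that nondegeneracy of the commutator form only forces $d$ even, not $d=2e$, and you then fall back on citing \cite{bonmassar:1999} for precisely the numerical statement to be proved, which is not a proof. The middle step is fine once exponent $p$ is granted: $\Phi(\overline{G})=\gamma_2$ is then immediate, and a central $z\notin\gamma_2$ would give $\gamma_2\leq\langle z\rangle^{\overline{G}}=\langle z\rangle$ of order $p$, forcing $\gamma_2=1$ or $\gamma_2=\langle z\rangle$, both impossible. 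But the two hard assertions of the proposition are exactly the ones left unproved.
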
 

Repeating the argument of the previous result it is possible to prove the following corollary.

\begin{cor}\label{bumi3.2}
Let $G$ be a $NC$-$p$-group of nilpotency class at least $3$. Then $\gamma_{i}(G)/\gamma_{i+2}(G)$ is elementary abelian for all $i \geq 2$.
\end{cor}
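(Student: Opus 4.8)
The plan is to imitate the proof of Proposition~\ref{bumi3.1}. Fix $i\ge 2$. Since $i\ge 2$ we have $[\gamma_{i}(G),\gamma_{i}(G)]\le\gamma_{2i}(G)\le\gamma_{i+2}(G)$, so $\gamma_{i}(G)/\gamma_{i+2}(G)$ is already abelian and it remains only to show that it has exponent $p$, that is, $\gamma_{i}(G)^{p}\le\gamma_{i+2}(G)$. Factor groups of $NC$-$p$-groups are $NC$, and $\gamma_{i}(G/\gamma_{i+3}(G))/\gamma_{i+2}(G/\gamma_{i+3}(G))\cong\gamma_{i}(G)/\gamma_{i+2}(G)$, so we may assume $c(G)\le i+2$; we then argue by induction on $c(G)$, so that the statement of the corollary may be taken for granted in every proper quotient of $G$.

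The first step is to record that $\gamma_{j}(G)^{p}\le\gamma_{j+1}(G)$ for every $j\ge 1$. By Proposition~\ref{bumi3.1} the quotient $G/\gamma_{3}(G)$ is special of exponent $p$; in particular $\Phi(G)=\gamma_{2}(G)$, so $L_{1}:=G/\gamma_{2}(G)$ has exponent $p$. Passing to the associated graded Lie ring $L=\bigoplus_{j\ge 1}\gamma_{j}(G)/\gamma_{j+1}(G)$, which is generated in degree one by $L_{1}$, the $\Z$-bilinearity of the Lie bracket forces each homogeneous component $L_{j}=\gamma_{j}(G)/\gamma_{j+1}(G)$ to have exponent $p$, i.e. $\gamma_{j}(G)^{p}\le\gamma_{j+1}(G)$. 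In particular $\gamma_{i}(G)^{p}\le\gamma_{i+1}(G)$ and $\gamma_{i+1}(G)^{p}\le\gamma_{i+2}(G)$; since moreover every commutator of weight $\ge 2$ in elements of $\gamma_{i}(G)$ lies in $\gamma_{2i}(G)\le\gamma_{i+2}(G)$, the $p$-th power map induces a well-defined group homomorphism
\[
\theta\colon\ \gamma_{i}(G)/\gamma_{i+1}(G)\ \longrightarrow\ \gamma_{i+1}(G)/\gamma_{i+2}(G),\qquad x\,\gamma_{i+1}(G)\ \longmapsto\ x^{p}\,\gamma_{i+2}(G).
\]
Since $\imm\theta=\gamma_{i}(G)^{p}\gamma_{i+2}(G)/\gamma_{i+2}(G)$, what has to be proved is that $\theta$ is trivial, equivalently that the normal subgroup $N:=\gamma_{i}(G)^{p}\gamma_{i+2}(G)$, which satisfies $\gamma_{i+2}(G)\le N\le\gamma_{i+1}(G)$, equals $\gamma_{i+2}(G)$.

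To obtain $N=\gamma_{i+2}(G)$ I would repeat, with all indices shifted up by $i-1$, the part of the proof of Proposition~\ref{bumi3.1} that yields $G^{p}\le\gamma_{3}(G)$ and not merely $G^{p}\le\gamma_{2}(G)$: one applies the defining condition~(iii) of $NC$-groups to an element $x\in\gamma_{i}(G)\setminus\gamma_{i+1}(G)$ (so that $\gamma_{i+1}(G)\le\langle x\rangle^{G}$) and combines it with the orders $|\gamma_{j}(G)/\gamma_{j+1}(G)|$ furnished by Proposition~\ref{bumi3.1} and by the inductive hypothesis applied to proper quotients, to conclude that $N$ cannot be proper over $\gamma_{i+2}(G)$. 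I expect this to be the main obstacle, and for the same reason as in Proposition~\ref{bumi3.1}: the bare $NC$ lattice condition gives no information here, because $N$ lies between two consecutive terms of the lower central series and is therefore automatically comparable with every term; one genuinely has to exploit the finer arithmetic of $NC$-groups --- condition~(iii) together with the special structure of $G/\gamma_{3}(G)$ and the order relation $|\gamma_{2}(G)/\gamma_{3}(G)|^{2}=|G/\gamma_{2}(G)|$ --- and carry the induction on the nilpotency class so that this arithmetic is available in every quotient that arises.
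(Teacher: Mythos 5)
Your plan coincides with the paper's: the paper offers no argument beyond ``repeating the argument of the previous result'' (Proposition \ref{bumi3.1}, whose proof lives in \cite{bonmassar:1999}), which is exactly the index-shifting you propose, and the reductions you do carry out --- abelianness from $\gamma_{2i}(G)\le\gamma_{i+2}(G)$, the one-step exponent bound $\gamma_{j}(G)^{p}\le\gamma_{j+1}(G)$ via the graded Lie ring, and the homomorphism $\theta$ --- are all correct. Be aware, though, that the decisive point, namely that $p$-th powers of elements of $\gamma_{i}(G)$ land in $\gamma_{i+2}(G)$ and not merely in $\gamma_{i+1}(G)$, is left as a declared intention in your write-up exactly as it is in the paper, so neither text contains a self-contained argument for that step.
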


Thus every $NC$-$p$-group is generated by an even number of elements, and every double quotient of its lower central series has exponent $p$. Moreover we can obtain a stronger property of such groups, known as \emph{covering property}.

\begin{pro} \label{bumi3.3}
Let $G$ be a $p$-group of nilpotency class greater than or equal to $3$. The following conditions are
equivalent:
\begin{enumerate}
\item $G$ is a $NC$-$p$-group,
\item for all $i \geq 1$ and for all $x \in \gamma_{i}(G) - \gamma_{i+1}(G)$ we have  $[x,G]\gamma_{i+2}(G)=\gamma_{i+1}(G)$.
\end{enumerate}
\end{pro}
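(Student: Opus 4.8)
The plan is to prove the two implications separately, in each case passing to the quotient $\bar G=G/\gamma_{i+2}(G)$ so that $\gamma_{i+1}(\bar G)$ becomes central. This is the crucial normalization: once $\gamma_{i+1}(\bar G)\le Z(\bar G)$, the map $g\mapsto[\bar x,g]$ is a homomorphism $\bar G\to\gamma_{i+1}(\bar G)$ for any fixed $\bar x$, so $[\bar x,\bar G]$ is a (central, hence normal) subgroup of $\bar G$, the product $\langle\bar x\rangle[\bar x,\bar G]$ is again a normal subgroup, and one checks directly that it coincides with the normal closure $\langle\bar x\rangle^{\bar G}$. Also, for a fixed $i$ and $x\in\gamma_i(G)-\gamma_{i+1}(G)$, the inclusion $[x,G]\gamma_{i+2}(G)\le\gamma_{i+1}(G)$ is automatic since $x\in\gamma_i(G)$, so in both directions only the reverse inclusion carries content.

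For $(i)\Rightarrow(ii)$: assume $G$ is $NC$ of class at least $3$ and fix $i\ge1$ and $x\in\gamma_i(G)-\gamma_{i+1}(G)$ (the cases $i\ge c(G)$ being vacuous or trivial). By Proposition~\ref{bumi3.1} when $i=1$ and by Corollary~\ref{bumi3.2} when $i\ge2$, the quotient $\gamma_i(G)/\gamma_{i+2}(G)$ has exponent $p$, so $\bar x$ has order dividing $p$; since $\bar x\notin\gamma_{i+1}(\bar G)$ this forces $\langle\bar x\rangle\cap\gamma_{i+1}(\bar G)=1$. Factor groups of $NC$-groups are $NC$, so condition (iii) of the definition applied in $\bar G$ (with index $i+1$) gives $\gamma_{i+1}(\bar G)\le\langle\bar x\rangle^{\bar G}=\langle\bar x\rangle[\bar x,\bar G]$; intersecting both sides with $\gamma_{i+1}(\bar G)$ and using $\langle\bar x\rangle\cap\gamma_{i+1}(\bar G)=1$ yields $\gamma_{i+1}(\bar G)=[\bar x,\bar G]$, which upon lifting is exactly $\gamma_{i+1}(G)=[x,G]\gamma_{i+2}(G)$.

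For $(ii)\Rightarrow(i)$: I will verify the characterization of $NC$ by normal subgroups. Let $1\ne N\lhd G$ and let $j$ be the largest index with $N\le\gamma_j(G)$; then $N\not\le\gamma_{j+1}(G)$, so we may pick $x\in N-\gamma_{j+1}(G)\subseteq\gamma_j(G)-\gamma_{j+1}(G)$. The covering property gives $\gamma_{j+1}(G)=[x,G]\gamma_{j+2}(G)\le N\gamma_{j+2}(G)$, and then a short bootstrap — repeatedly applying $[-,G]$ and using $[AB,G]=[A,G][B,G]$ for normal $A,B$ together with the nilpotence of $G$ — upgrades this to $\gamma_{j+1}(G)\le N$. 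Consequently, for every index $i\le j$ we have $N\le\gamma_j(G)\le\gamma_i(G)$, and for every $i\ge j+1$ we have $N\ge\gamma_{j+1}(G)\ge\gamma_i(G)$; hence $G$ is $NC$.

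The main obstacle is the precise identification of the normal closure: one must show that modulo $\gamma_{i+2}(G)$ the subgroup $\langle x\rangle^G$ equals $\langle x\rangle[x,G]\gamma_{i+2}(G)$, and one must know that $x$ becomes an element of order dividing $p$ in the quotient. This is exactly the place where the two regimes split, since the exponent-$p$ information comes from Proposition~\ref{bumi3.1} for $i=1$ and from Corollary~\ref{bumi3.2} for $i\ge2$; everything else, including the bootstrapping step and the edge cases $i\ge c(G)$, is routine once this is in place.
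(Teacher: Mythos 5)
Your proof is correct. Note, however, that the paper itself does not prove Proposition \ref{bumi3.3}: it is listed among the ``useful properties of these groups whose proofs can be found in \cite{bonmassar:1999}'', so there is no in-paper argument to compare against. Judged on its own, your argument is sound and self-contained modulo the results it invokes. The reduction to $\bar G=G/\gamma_{i+2}(G)$ correctly makes $[\bar x,\bar G]$ central, so $\langle\bar x\rangle^{\bar G}=\langle\bar x\rangle[\bar x,\bar G]$ holds and the Dedekind intersection with $\gamma_{i+1}(\bar G)$ goes through once you know $\langle\bar x\rangle\cap\gamma_{i+1}(\bar G)=1$; your appeal to Proposition \ref{bumi3.1} (for $i=1$) and Corollary \ref{bumi3.2} (for $i\ge2$) to get exponent $p$ is legitimate since those results precede \ref{bumi3.3} in the paper's (and the source's) logical order and are used only in the direction where $G$ is already assumed $NC$. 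The converse via the largest $j$ with $N\le\gamma_j(G)$ and the bootstrap $\gamma_{j+1}(G)\le N\gamma_{j+k}(G)$ for all $k$ is the standard argument and terminates by nilpotency. The edge cases $i\ge c(G)$ are handled correctly as you note. The only stylistic remark is that you verify condition (ii) of the definition of $NC$ while using condition (iii) in the forward direction; this is fine because the paper's definition asserts the equivalence of its three conditions, but it is worth being explicit that you are relying on that equivalence rather than proving it.
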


\begin{cor} \label{bumi3.4}
Let $G$ be a $NC$-$p$-group of nilpotency class at least $3$. Then the upper and lower central series of $G$ coincide. 
\end{cor}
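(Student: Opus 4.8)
The plan is to prove by induction on $i$ that $Z_i(G) = \gamma_{c+1-i}(G)$ for every $i$ with $0 \le i \le c$, where $c = c(G)$; since in a nilpotent group of class exactly $c$ one has the strictly decreasing chain $\gamma_1(G) > \gamma_2(G) > \cdots > \gamma_c(G) > \gamma_{c+1}(G) = 1$ (if $\gamma_j(G) = \gamma_{j+1}(G)$ with $j \le c$ then the lower central series stabilises and $\gamma_j(G) = 1$, a contradiction), this statement is exactly the assertion that the upper and lower central series coincide. The case $i = 0$ is trivial. For the inductive step I assume $Z_i(G) = \gamma_{c+1-i}(G)$ and set $m = c - i$, so that $Z_i(G) = \gamma_{m+1}(G)$, and I must show $Z_{i+1}(G) = \gamma_m(G)$. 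One inclusion is automatic and holds for any nilpotent group: since $[\gamma_m(G),_{i+1}G] = \gamma_{m+i+1}(G) = \gamma_{c+1}(G) = 1$, we get $\gamma_m(G) \le Z_{i+1}(G)$.

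The substantive inclusion is $Z_{i+1}(G) \le \gamma_m(G)$, and this is where the covering property (Proposition \ref{bumi3.3}) enters. Suppose not, and pick $x \in Z_{i+1}(G) \setminus \gamma_m(G)$; let $t$ be the unique index with $x \in \gamma_t(G) \setminus \gamma_{t+1}(G)$, so that $1 \le t \le m-1$. By the covering property, $[x,G]\gamma_{t+2}(G) = \gamma_{t+1}(G)$. On the other hand $x \in Z_{i+1}(G)$ forces $[x,G] \le Z_i(G) = \gamma_{m+1}(G) \le \gamma_{t+2}(G)$, the last inclusion because $t + 2 \le m + 1$. Combining the two gives $\gamma_{t+1}(G) = \gamma_{t+2}(G)$ with $t+1 \le m \le c$, contradicting the strict descent of the lower central series. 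Hence $Z_{i+1}(G) \le \gamma_m(G)$, equality follows, and the induction closes.

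The only genuine ingredient is the covering property: the real danger is that, a priori, $Z_{i+1}(G)$ could be a normal subgroup sitting strictly between two consecutive terms of the lower central series, and Proposition \ref{bumi3.3} is precisely what excludes this. I expect no serious obstacle beyond keeping the indices straight; in particular one should note the degenerate end case $m = 1$, where the claim $Z_c(G) = G$ is immediate from the definition of nilpotency class and the displayed argument is vacuous, and one uses throughout that $x \in Z_{i+1}(G)$ means $[x,G] \le Z_i(G)$, which is just the definition of the upper central series.
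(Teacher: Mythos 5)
Your argument is correct: the induction $Z_i(G)=\gamma_{c+1-i}(G)$, with the easy inclusion from $[\gamma_m(G),G]=\gamma_{m+1}(G)=Z_i(G)$ and the reverse inclusion forced by the covering property of Proposition \ref{bumi3.3} together with the strict descent of the lower central series, is exactly the intended derivation (the paper prints no proof, citing \cite{bonmassar:1999}, but states the corollary immediately after the covering property precisely because it follows from it in this way). No gaps; the index bookkeeping and the vacuous case $m=1$ are handled correctly.
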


The last result implies, clearly, that in a normally constrained $p$-group $G$ the quotients of the terms of the lower central series are elementary abelian, and so $\gamma_{2}(G)=\Phi(G)$. Let us finish this section recalling the following result. 

\begin{thm} \label{bumi3.5}
Let $G$ be a $NC$-$p$-group of nilpotency class $c(G) \geq 3$ such that $|G:\gamma_{2}(G)|=p^{2n}$ for some $n \in \N$. Then for all $1 \leq i < c(G) $ we have that $p^{n} \leq |\gamma_{i}(G):\gamma_{i+1}(G)| 	\leq p^{2n}$. 
\end{thm}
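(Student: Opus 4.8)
The plan is to pass to the associated graded ring and study the jumps $d_j:=\log_p|\gamma_j(G):\gamma_{j+1}(G)|$, using three inputs. By Corollary~\ref{bumi3.2} every $\gamma_j(G)/\gamma_{j+2}(G)$ is elementary abelian, so commutation induces well defined $\F_p$-bilinear maps on the associated graded ring. By Proposition~\ref{bumi3.3} (the covering property), for each $1\le j<c(G)$ and each nonzero $\bar x\in\gamma_j(G)/\gamma_{j+1}(G)$ the map $g\gamma_2(G)\mapsto[x,g]\gamma_{j+2}(G)$ from $G/\gamma_2(G)$ onto $\gamma_{j+1}(G)/\gamma_{j+2}(G)$ is surjective. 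Finally, by Proposition~\ref{bumi3.1} we have $d_1=2n$, $d_2=n$, and since $G/\gamma_3(G)$ is special the commutator form $f\colon G/\gamma_2(G)\times G/\gamma_2(G)\to\gamma_2(G)/\gamma_3(G)$ is strongly non-degenerate, meaning $f(\bar g,G/\gamma_2(G))=\gamma_2(G)/\gamma_3(G)$ for every $\bar g\neq 0$.

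The upper bound will be immediate: for $2\le i\le c(G)$ pick $y\in\gamma_{i-1}(G)\setminus\gamma_i(G)$, so that $g\gamma_2(G)\mapsto[y,g]\gamma_{i+1}(G)$ is a surjection of $G/\gamma_2(G)$ onto $\gamma_i(G)/\gamma_{i+1}(G)$ by the covering property; hence $d_i\le 2n$, and $d_1=2n$.

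For the lower bound I would induct on $i$, the cases $i=1$ and $i=2$ being given by Proposition~\ref{bumi3.1}. Assume $d_i\ge n$ and suppose, towards a contradiction, that $d_{i+1}<n$. The key observation is that, by the covering property, for every nonzero $\bar x\in\gamma_i(G)/\gamma_{i+1}(G)$ the subspace $C(\bar x):=\{\,g\gamma_2(G):[x,g]\in\gamma_{i+2}(G)\,\}$ of $G/\gamma_2(G)$ has dimension $2n-d_{i+1}$, the \emph{same for every} $\bar x$; under the hypothesis $\dim C(\bar x)>n$. The graded Jacobi identity then yields $[\bar x,[v,w]]=0$ in $\gamma_{i+2}(G)/\gamma_{i+3}(G)$ for all $v,w\in C(\bar x)$, so $[C(\bar x),C(\bar x)]\subseteq\gamma_2(G)/\gamma_3(G)$ lies in the annihilator of $\bar x$. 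On the other hand, if this bracket were a proper subspace, composing $f$ with a nonzero linear functional on $\gamma_2(G)/\gamma_3(G)$ vanishing on it would exhibit $C(\bar x)$ as a totally isotropic subspace of dimension exceeding $n$ for a non-degenerate alternating form on the $2n$-dimensional space $G/\gamma_2(G)$, contradicting that such a form has Witt index $n$. Hence $[C(\bar x),C(\bar x)]=\gamma_2(G)/\gamma_3(G)$, so $\bar x$ annihilates all of $\gamma_2(G)/\gamma_3(G)$ in degree $i+2$, and since this holds for every $\bar x$ we get $[\gamma_2(G),\gamma_i(G)]\subseteq\gamma_{i+3}(G)$.

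It then remains to extract a contradiction from this over-centralization. The same argument also gives $d_{i+2}\le d_{i+1}<n$ (via the symmetric form $w\mapsto[[g,x],w]$ on $G/\gamma_2(G)$ attached to a representative $g$ with $[g,x]\notin\gamma_{i+2}(G)$), so it re-applies at the next levels and, iterating, forces $[\gamma_2(G),\gamma_j(G)]\subseteq\gamma_{j+3}(G)$ all the way up to $j=c(G)-2$; via Corollary~\ref{bumi3.4} this reads $[\Phi(G),Z_3(G)]=1$, which must then be confronted with the covering property and condition (iii) in the definition of an $NC$-group to reach the desired contradiction. \textbf{This closing step is the main obstacle}: the symplectic geometry on $G/\gamma_3(G)$ is transparent, but carrying the over-centralization up the lower central series and converting it into a genuine contradiction is delicate, so I would first isolate the critical case $i=c(G)-2$ — equivalently, prove $|Z_2(G):Z(G)|\ge p^{n}$ for every $NC$-group of class $\ge 3$ — and then recover the full statement by applying it to the $NC$-quotients $G/\gamma_{i+2}(G)$.
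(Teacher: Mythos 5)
The paper itself gives no proof of Theorem \ref{bumi3.5} --- it is quoted from \cite{bonmassar:1999} --- so your argument can only be judged on its own terms. Your upper bound is complete and correct, and the machinery you assemble for the lower bound is sound: the graded Lie ring, the fact that every nonzero functional $\lambda$ on $\gamma_2(G)/\gamma_3(G)$ yields a non-degenerate alternating form on $G/\gamma_2(G)$ (so that $\dim C(\bar x)>n$ forces $[C(\bar x),C(\bar x)]=\gamma_2(G)/\gamma_3(G)$), the Jacobi identity giving $[\bar x,\gamma_2(G)/\gamma_3(G)]=0$, the resulting symmetry of $(v,w)\mapsto[[x,v],w]$, and hence the inclusion $C(\bar x)\subseteq C(\bar y)$ for every nonzero $\bar y\in\gamma_{i+1}(G)/\gamma_{i+2}(G)$, together with $d_{i+2}\le d_{i+1}$. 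But the step you yourself flag as ``the main obstacle'' is a genuine gap, and the route you propose for closing it does not work: carrying the over-centralization to the top only yields $[\Phi(G),Z_3(G)]=1$, and nothing in the definition of an $NC$-group, in Proposition \ref{bumi3.3}, or in condition (iii) forbids $Z_3(G)\le C_G(\Phi(G))$. (The identity $C_G(\Phi(G))=Z(\Phi(G))$ is only available in Section 3 as a reduction via Deaconescu, not as a structural fact about $NC$-groups.) So, as written, the proof is incomplete at its decisive point.

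The good news is that the contradiction can be extracted \emph{locally}, inside the framework you already built, without climbing to the top of the series. Write $L_k=\gamma_k(G)/\gamma_{k+1}(G)$ and let $j\ge 3$ be \emph{minimal} with $d_j<n$; suppose $j\le c(G)-1$. You have already proved that $C(\bar x)\subseteq C(\bar y)$ for every nonzero $\bar x\in L_{j-1}$ and every nonzero $\bar y\in L_{j}$. Now invoke minimality: $\dim L_{j-1}=d_{j-1}\ge n>d_j=\dim L_{j}$, so for each $v\in L_1$ the linear map $L_{j-1}\to L_{j}$, $\bar x\mapsto[\bar x,v]$, has nontrivial kernel; that is, every $v\in L_1$ lies in $C(\bar x)$ for \emph{some} nonzero $\bar x\in L_{j-1}$. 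Consequently $L_1=\bigcup_{\bar x\ne 0}C(\bar x)\subseteq C(\bar y)$, i.e.\ $[\bar y,L_1]=0$, which contradicts the covering property $[\bar y,L_1]=L_{j+1}\ne 0$ (valid because $j+1\le c(G)$). This single observation replaces the entire ``iterate the over-centralization up to $Z_3(G)$'' plan and, together with what you have, completes the lower bound; the reduction to $|Z_2(G):Z(G)|\ge p^n$ via the quotients $G/\gamma_{i+2}(G)$ that you mention at the end is then unnecessary.
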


\section{Berkovich Conjecture for two generator normally constrained $p$-groups}

To develop this section, let us start introducing the family of thin $p$-groups. Firstly, let us recall that in a group $G$ an antichain is a set of mutually incomparable elements in the lattice of its normal subgroups.  It is well-known that, if $G$ is a $p$-group of maximal class, then the lattice of its normal subgroups consists of $p+1$ maximal subgroups and of the terms of the lower central series of $G$. Thus, a $p$-group of maximal class has only one antichain, which consists of its maximal subgroups. The necessity to extend the family of groups of maximal class to a bigger family of $p$-groups with a bound on the antichains, leads us to introduce the formal definition of thin $p$-group. Let us introduce the definition of thin $p$-groups as in \cite{caranti:1996}. 

\begin{dfn}
Let $G$ be a finite $p$-group. Then $G$ is thin if every antichain in $G$ contains at most $p+1$ elements.
\end{dfn}

The following results about finite thin $p$-groups are discussed in \cite{brandl:1992}.
\begin{lem} \label{basics}
Let $G$ be a finite thin $p$-group, and let $p$ be an odd prime. 
If $N$ is a normal subgroup of $G$,  then $N$ is a term of the lower central series of $G$ if and only if $N$ is the unique normal subgroup of its order.
\end{lem}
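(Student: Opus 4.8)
The plan is to characterize the terms of the lower central series of a finite thin $p$-group ($p$ odd) purely in terms of uniqueness of order. One implication is immediate: since a $p$-group of maximal class has only one antichain (its maximal subgroups), and more generally a thin $p$-group has very restricted normal structure, we expect that if $N = \gamma_i(G)$ then $N$ is automatically the unique normal subgroup of its order. Actually the direction ``term of lower central series $\Rightarrow$ unique of its order'' is the substantive one and should follow from the structure of thin $p$-groups as developed in \cite{brandl:1992}: in a thin $p$-group the normal subgroups are tightly pinched between consecutive terms $\gamma_{i+1}(G) \leq N \leq \gamma_i(G)$ with $|\gamma_i(G):\gamma_{i+1}(G)| \leq p^2$, so the only possible ``ambiguity'' in order occurs for the diamonds of width $p$ (the $p+1$ subgroups strictly between $\gamma_{i+1}$ and $\gamma_i$ when that quotient has order $p^2$). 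The point is that $\gamma_i(G)$ itself, being the \emph{top} of such a diamond (or at a singleton level), cannot coincide in order with any sibling, because any other normal subgroup of that order would have to sit at the same level of the series, and at each level the terms $\gamma_i(G)$ are the unique normal subgroups containing $\gamma_{i+1}(G)$ with that index.

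For the converse — if $N$ is the unique normal subgroup of its order then $N$ is a term of the lower central series — I would argue by contradiction. Suppose $N \lhd G$ is unique of its order but $N \neq \gamma_i(G)$ for every $i$. By the antichain bound and the standard fact that every normal subgroup lies between two consecutive terms of the lower central series, there is an index $i$ with $\gamma_{i+1}(G) < N < \gamma_i(G)$ and $|\gamma_i(G):\gamma_{i+1}(G)| = p^2$; so $N$ is one of the $p+1$ intermediate subgroups forming an antichain of maximal width. The key step is then to exhibit another normal subgroup of $G$ of the same order as $N$: because $G$ acts on the ``diamond'' $\gamma_i(G)/\gamma_{i+1}(G) \cong \F_p^2$ and this action is nontrivial in a thin non-(maximal class) configuration, one can produce a second normal subgroup of index $p$ in $\gamma_i(G)$, hence of the same order as $N$, contradicting uniqueness — or, where the diamond is $G$-invariant pointwise, use an automorphism/central argument to the same effect. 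The oddness of $p$ enters here, exactly as it is needed to rule out the maximal-class $2$-group exceptions mentioned in the paper.

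The main obstacle I anticipate is making the second half rigorous: showing that whenever $N$ is a ``non-series'' normal subgroup it genuinely has a companion of the same order. This requires a careful look at how $G$ acts on the width-$p$ diamonds of a thin $p$-group, and in particular ruling out the degenerate case where a diamond contains a \emph{unique} $G$-invariant intermediate subgroup — one must check that such a subgroup is then forced to be a term of the lower central series after all (e.g.\ it would have to be a centralizer-type subgroup coinciding with some $\gamma_j(G)$). I expect the cleanest route is to quote the detailed normal-subgroup lattice description of thin $p$-groups from \cite{brandl:1992}: there the $\gamma_i(G)$ are identified as precisely the ``nodes'' of the lattice (the points where the lattice is locally a chain, i.e. which are comparable to every normal subgroup), and a node is visibly unique of its order while a non-node sits in a genuine antichain of size $p+1 \geq 3$ whose members are permuted transitively (or at least non-trivially) by $G$, yielding the needed companion. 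With that structural input the lemma reduces to a short deduction in both directions.
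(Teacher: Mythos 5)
The paper states this lemma without proof, simply citing \cite{brandl:1992}, so there is no in-paper argument to compare against; judged on its own terms, your sketch has the right skeleton for the forward direction but a genuine error in the converse. Granting the sandwiching property from \cite{brandl:1992} (every $N\lhd G$ satisfies $\gamma_{i+1}(G)\le N\le\gamma_i(G)$ for some $i$, with $|\gamma_i(G):\gamma_{i+1}(G)|\le p^2$), the forward direction is indeed a two-line order comparison: a normal subgroup $M$ with $|M|=|\gamma_i(G)|$ is wedged between consecutive terms whose orders force $M=\gamma_i(G)$.

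The problem is in the converse. You propose to produce a ``companion'' normal subgroup of the same order as a non-series $N$ by arguing that $G$ acts \emph{nontrivially} on the diamond $\gamma_i(G)/\gamma_{i+1}(G)\cong\F_p^{\,2}$ and permutes the intermediate subgroups, and you flag as the main unresolved obstacle the ``degenerate case'' of a unique $G$-invariant intermediate subgroup. This is exactly backwards. By the definition of the lower central series, $[\gamma_i(G),G]\le\gamma_{i+1}(G)$, so $G$ acts \emph{trivially} on $\gamma_i(G)/\gamma_{i+1}(G)$; consequently every one of the $p+1$ order-$p$ subgroups of that quotient pulls back to a normal subgroup of $G$, and any $N$ lying strictly between $\gamma_{i+1}(G)$ and $\gamma_i(G)$ automatically has $p\ge 2$ siblings of the same order, contradicting uniqueness. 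That single observation closes the converse; no transitivity of the $G$-action is available or needed (a nontrivial unipotent action on $\F_p^{\,2}$ would leave exactly \emph{one} line invariant, which would destroy, not supply, the companion you want, and your ``degenerate case'' simply never occurs). Note also that the oddness of $p$ plays no role in this argument; it is a standing hypothesis of the section rather than the mechanism producing the second normal subgroup.
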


\begin{rmk} \label{thinnc}
Let $G$ be a  two generator normally constrained $p$-group. According to Theorem \ref{bumi3.5} all quotients of the terms of the lower central series of $G$ are of order at most $p^{2}$, and by Lemma \ref{basics} all terms of the lower central series are unique in their order. Thus every antichain in $G$ cannot contain more than $p+1$ elements, so $G$ is thin. On the other hand, since an elementary abelian $p$-group is thin if and only if  its order  is $p^{2}$ (see \cite{caranti:1996}), then every finite thin $p$-group is a two generator group. Clearly, if $G$ is thin, then $G$ is normally constrained, so thin $p$-groups are exactly normally constrained two generator $p$-groups. 
\end{rmk}

Forwards, we again mention some already proved results about the existence of non-inner automorphisms of order $p$ in certain specific cases to avoid from now onwards repetitions, not only in the case of finite thin $p$-groups, but also in the case of finite normally constrained $p$-groups (which will be studied in the fourth section).
 
Since Liebeck in \cite{liebeck:1965} proved the existence of at least a non-inner automorphism of order $p$ in all finite $p$-groups of class $2$ for any $p$ odd prime,  Abdollahi in \cite{abdollahi:2007} proved the existence of such an non-inner automorphism of order $2$ in all finite $2$-groups of class $2$,  and Abdollahi, Ghoraishi and Wilkens in  \cite{abdollahi':2013} did the same in the case of finite $p$-groups of nilpotency class $3$, from now onwards in our study we will deal with finite $p$-groups of nilpotency class $c=c(G)\geq 4$. On the other hand,  since Deaconescu in \cite{deaconescu:2002} proved the existence of at least a non-inner automorphism of order $p$ for all finite $p$-groups which are not strongly Frattinian, we may assume that the finite $p$-groups $G$ we are interested in, are strongly Frattinian, in other words, that the groups of our interest satisfy  $C_{G}(\Phi(G))=Z(\Phi(G))$. 

Furthermore, as a result due to Abdollahi in \cite{abdollahi:2010}, we know that if $G$
  is a finite $p$-group such that $G$ has no non-inner automorphisms of order $p$ leaving $\Phi(G)$ elementwise fixed, then $d(Z_{2}(G)/Z(G))=d(G)d(Z(G))$. Thus, in view of this previous matter, we may assume that the condition $d(Z_{2}(G)/Z(G)) = d(G)d(Z(G))$ holds.

\begin{rmk} \label{structurethin}
By Corollary \ref{bumi3.4} and Theorem \ref{bumi3.5} the lower and the upper central series of thin $p$-groups coincide and  all quotients of these series are elementary abelian $p$-groups of order at most $p^{2}$. Moreover, $Z(G)$ must be cyclic of order $p$, since we assume that the condition $d(Z_{2}(G)/Z(G)) = d(G)d(Z(G))$ holds. In particular, the quotients of the lower and upper central of finite thin $p$-groups have exponent $p$.  Moreover, if $G$ is a finite thin $p$-group, then $\Phi(G)=\gamma_{2}(G)$, $Z_2(G)\leq Z(\Phi(G))$, and the property stated in the second part of Proposition \ref{bumi3.3} holds equivalently for terms of the upper central series of $G$. In addition to this, since in \cite{brandl:1988} (see Theorem $B$) it is shown that every finite thin $2$-group is of maximal class, in the following we just focus our attention on finite thin $p$-groups, where $p$ is an odd prime.
\end{rmk} 

Now we are ready to prove the next theorem.

\begin{thm} \label{main}
Let $G$ be a finite thin $p$-group, where  $p$ is an odd prime. Then $G$ has a non-inner automorphism of order $p$.
\end{thm}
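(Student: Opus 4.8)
The plan is to exploit the dichotomy coming from the reductions assembled in Remarks~\ref{thinnc} and~\ref{structurethin}: we may assume $G$ is a finite thin $p$-group of nilpotency class $c=c(G)\geq 4$, strongly Frattinian (so $C_{G}(\Phi(G))=Z(\Phi(G))$), with $Z(G)$ cyclic of order $p$, and satisfying $d(Z_{2}(G)/Z(G))=d(G)d(Z(G))=2$. In particular $\Phi(G)=\gamma_2(G)$, every lower-central quotient $\gamma_i(G)/\gamma_{i+1}(G)$ is elementary abelian of order $p$ or $p^2$, and the covering property (Proposition~\ref{bumi3.3}) holds for the upper central series too. The strategy is to build the desired automorphism as $\phi(g)=g\,\delta(g)$ for a suitable nonzero derivation $\delta:G\to M$ into a well-chosen central subgroup $M$ (so that $M$ is a trivial $G$-module), invoking Lemma~\ref{lift}: such a $\phi$ is an automorphism once $\delta(M)=1$, it has order dividing $p$ since $M$ is elementary abelian and central (so $\phi^p(g)=g\,\delta(g)^p=g$), and it is non-inner precisely when $\delta$ is not of the form $g\mapsto[g,z]$ for some $z\in G$.

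\emph{Step 1 (choice of the target module).} Take $M$ to be a minimal normal subgroup inside $Z(G)$; since $Z(G)$ is cyclic of order $p$ we get $M=Z(G)\cong\Z/p$. Viewing $M$ as a trivial $G$-module, by Lemma~\ref{Der} derivations $\delta:G\to M$ correspond to homomorphisms $\bar\delta:F\to M$ (where $F$ is free on $d(G)=2$ generators) killing the relators of $G$; equivalently, using Lemma~\ref{free}, $\delta$ factors through $G/\gamma_2(G)^{?}\cdots$ — concretely $\delta$ is determined by its values on the two generators $x_1,x_2$ of $G$ subject to the relations, and it vanishes on $\gamma_i(G)$ as soon as $[M,_{i-1}G]=1$, i.e. on $\gamma_2(G)$ already. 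Hence $\mathrm{Der}(G,M)$ has dimension (over $\F_p$) equal to $\dim\mathrm{Hom}(G/\gamma_2(G),M)=d(G)=2$, while inner derivations $g\mapsto[g,z]$ into $M\leq Z(G)$ are all trivial. So \emph{any} nonzero $\delta$ with $\delta(M)=1$ gives a non-inner automorphism of order $p$ — provided such a $\delta$ exists, i.e. provided some nonzero homomorphism $G/\gamma_2(G)\to M$ can be lifted to a genuine derivation on $G$ (that it kills the relators) and kills $M$.

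\emph{Step 2 (the obstruction and how to clear it).} The issue is that a linear functional $\lambda:G/\gamma_2(G)\to M$ need not extend to a derivation of $G$: the obstruction is exactly that $\bar\delta$ must vanish on the relator subgroup $R$, which by Lemma~\ref{free}(1) forces a compatibility on $p$-th powers, $\delta(F^p)=\delta(F)^p[\delta(F),_{p-1}F]$, and this expression lands in $[M,_{p-1}G]\cdot M^{?}$; since $M$ has exponent $p$ and is central, $\delta(F^p)=1$, so the $p$-power relations impose no condition, and the only relations to check are the defining ones of $G$ inside $\gamma_2(G)=\Phi(G)$, on which $\delta$ is already forced to vanish. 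The subtle point — the main obstacle — is to ensure that the resulting $\phi$ is an \emph{automorphism} and not merely an endomorphism; by Lemma~\ref{lift} this needs $\delta(M)=1$, i.e. $\lambda$ must vanish on the image of $M=Z(G)$ in $G/\gamma_2(G)$, which is automatic since $Z(G)\leq\gamma_2(G)$ as $c\geq 4$ (indeed $Z(G)=\gamma_c(G)\leq\Phi(G)$). Thus both potential obstructions evaporate, and choosing $\lambda\neq 0$ completes the argument.

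Finally, one must double-check non-innerness rigorously: an inner automorphism $g\mapsto g^z=g[g,z]$ restricted to the $G/\gamma_2(G)$-layer acts trivially (since $[G,z]\leq\gamma_2(G)$), whereas our $\phi$ induces the nontrivial map $\lambda$ on $G/\gamma_2(G)$; hence $\phi$ is non-inner. I expect the genuine technical content — and the place where the hypotheses $p$ odd, $c\geq 4$, and strong Frattini-ness are really used — to be hidden in verifying that $\delta$ kills \emph{all} relators (Step 2), which in a write-up amounts to the commutator identity in Lemma~\ref{free} together with the covering property; everything else is bookkeeping with the reductions already in place.
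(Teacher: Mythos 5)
Your construction cannot work as stated: the automorphisms it produces are all inner. The fatal step is the non-innerness argument at the end. You claim that ``inner derivations $g\mapsto[g,z]$ into $M\leq Z(G)$ are all trivial'' and that your $\phi$ ``induces the nontrivial map $\lambda$ on $G/\gamma_2(G)$''. Both claims are false. First, for any $z\in Z_2(G)\setminus Z(G)$ the map $g\mapsto[g,z]$ is a \emph{nontrivial} derivation (indeed a homomorphism) from $G$ into $Z(G)$; the inner derivations with values in $M$ are those coming from $z\in Z_2(G)$, not from $z\in M$. Second, since $\lambda(g)\in Z(G)\leq\gamma_2(G)$ (as you yourself note, $Z(G)=\gamma_c(G)\leq\Phi(G)$), your $\phi(g)=g\lambda(g)$ satisfies $\phi(g)\equiv g\pmod{\gamma_2(G)}$, so it acts trivially on $G/\gamma_2(G)$, exactly like the inner automorphisms you are trying to distinguish it from. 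Worse still, the reductions you invoke turn this from a gap into an impossibility: the assignment $z\mapsto(g\mapsto[g,z])$ is an injective homomorphism from $Z_2(G)/Z(G)$ into $\mathrm{Hom}(G/\Phi(G),Z(G))$, and under the standing assumptions $Z(G)\cong C_p$ and $d(Z_2(G)/Z(G))=d(G)=2$ both groups have order $p^2$, so the map is an isomorphism. Hence \emph{every} homomorphism $G\to Z(G)$ equals $[\,\cdot\,,z]$ for some $z\in Z_2(G)$, and every automorphism obtained from your Step 1 is inner.

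This is precisely why the paper does not take $M=Z(G)$: it takes the strictly larger module $M=\Omega_1(Z_2(G))$, which is no longer a trivial $G$-module, so the verification that the assignment on generators kills the relators of $G/\gamma_3(G)$ (the $p$-th powers and the weight-three commutators) becomes genuine work, carried out via Lemma \ref{free} and the fact that $[Z_2(G),\gamma_2(G)]=1$. The payoff is a family of $|\Omega_1(Z_2(G))|^2$ automorphisms of order $p$, to be compared with the at most $p^4$ inner automorphisms induced by elements of $Z_3(G)$: when $Z_2(G)\cong C_p\times C_p\times C_p$ one gets $p^6>p^4$ and a counting argument finishes; when $Z_2(G)\cong C_{p^2}\times C_p$ counting does not suffice and the paper instead chooses the images of the generators carefully and uses the covering property of Proposition \ref{bumi3.3} to show that $[h,G]Z(G)\leq\Omega_1(Z_2(G))<Z_2(G)$ is impossible for an inducing element $h\in Z_3(G)\setminus Z_2(G)$. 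None of this case analysis can be bypassed by working inside $Z(G)$, since there the space of derivations is exactly exhausted by the inner ones.
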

\begin{proof}
Let  us denote $c$ the nilpotency class of $G$ with $c\geq 4$. From the assumptions and the consequences of Remarks \ref{thinnc} and \ref{structurethin} we know that $G$ is a two generator $p$-group, the lower and the upper central series of $G$ coincide,  $\Phi(G)=\gamma_{2}(G)$, $Z_2(G)\leq Z(\Phi(G))$, $Z(G) \cong C_{p}$, $d(Z_{2}(G)/Z(G)))=d(G)d(Z(G))$ and  $Z_{2}(G)/Z(G) \cong C_{p} \times C_{p}$. In particular, $[Z_{2}(G), \gamma_{2}(G)]=1$ and $\Omega_{1}(Z_{2}(G))$ is an elementary abelian subgroup of $G$. 

Indeed, $G/\gamma_{3}(G)$ has order $p^{3}$, class $2$ and  by Proposition \ref{bumi3.1} we may assume that its exponent is $p$, i.e. $G/ \gamma_{3}(G)$ is an extraspecial $p$-group of exponent $p$ and order $p^{3}$.   

Our goal is to obtain at least an automorphism of $G$ of order $p$. To do that,  firstly, we define an assignment  on generators of the free group generated by two elements sending them to  $\Omega_{1}(Z_{2}(G))$. By Lemma \ref{determineunique} it is possible to extend these assignments to a derivation. Secondly, we show that this map preserves the relations defining the quotient $G/ \gamma_{3}(G)$, and then we apply Lemma \ref{Der} to induce a derivation from  $G/ \gamma_{3}(G)$ to $\Omega_{1}(Z_{2}(G))$. And finally, we lift this found map to a derivation from $G$ to $\Omega_{1}(Z_{2}(G))$ applying Lemma \ref{lift}. In the following paragraphs we describe in detail each of these mentioned steps.
\noindent

To begin with, let $x \rightarrow u, y \rightarrow v $ be an assignment on generators $x,y$ of the two generator free group $F_{2}$, with $u,v \in \Omega_{1}(Z_{2}(G))$. By Lemma \ref{Der} this assignment extends uniquely to a derivation $ \delta : F_{2} \rightarrow \Omega_{1}(Z_{2}(G))$ such that $\delta(x)=u$ and $\delta(y)=v$. Doing some abuse of notation, we  assume that $G/ \gamma_{3}(G)$ corresponds to the presentation  $ \langle x,y  \ | \ x^{p}, y^{p}, [y,x,x], [y,x,y] \rangle$. Next, let us see that the equalities $\delta(x^{p})=1$, $\delta(y^{p})=1$, $\delta([y,x,x])=1$ and $\delta([y,x,y])=1$ hold. To do it, firstly, let us show that $\delta$ is trivial on the $p$-th powers of elements of $F_2$. In fact, considering $\pi'$ the canonical epimorphism from $F_{2}$ to $G/\gamma_{3}(G)$ we have
$$\delta(f^{p})= \delta(f)^{f^{p-1}}\delta(f^{p-1})= \cdots = \delta(f)^{f^{p-1}+ \cdots + 1}=\delta(f)^{\pi'(f^{p-1}+ \cdots + 1)}=$$
$$= \delta(f)^{\pi'(f)^{p-1}+ \cdots + 1}=\delta(f)^{p}[\delta(f),\pi'(f)]^{\binom{p}{2}}=1, \quad{ for \ all \ } f \in F_2.$$
Secondly, let us analyze the behaviour of $\delta$ on commutators. Since for any $g,h \in G$ it holds that $gh=hg[g,h]$, then applying $\delta$ to this previous equality, we get  $\delta(gh)=\delta(hg[g,h])=\delta(hg)^{[g,h]}\delta([g,h])$, and consequently taking into account as well that $[Z_{2}(G), \gamma_{2}(G)]=1$, we get
$\delta([g,h])=\delta(gh) (\delta(hg)^{[g,h]})^{-1}=\delta(gh) (\delta(hg))^{-1}=[\delta(g),h][g,\delta(h)]$, which is an element of $Z(G)$.

Moreover,
$$  \delta([y,x,y])=[\delta([y,x]),y] [[y,x],\delta(y)]=1,$$    $$\delta([y,x,x])=[\delta([y,x]),x] [[y,x],\delta(x)]=1.$$

\vspace{5pt}

Let us underline that the two previous equalities can be obtained as well, applying properly item (ii) of Lemma \ref{free}.

\noindent

Now by Lemma \ref{Der} we can induce a derivation $\overline{\delta}$ from  $G/ \gamma_{3}(G)$ to $\Omega_{1}(Z_{2}(G))$. The map $\delta' :G \rightarrow \Omega_{1}(Z_{2}(G))$ defined for all $ g \in G$ by the law $\delta ' (g)=\overline{\delta}(g\gamma_{3}(G))$  is a derivation from $G$ to $ \Omega_{1}(Z_{2}(G))$.  By Lemma \ref{lift}, $\delta'$ induces an automorphism $\phi$ of $G$ by the law $\phi(g)=g\delta'(g)$ for all $g \in G$, leaving, in particular, $\Phi(G)$ elementwise fixed. Clearly, $Z_{2}(G) \leq Z(\gamma_{3}(G))$. In fact, this previous inclusion holds since $Z_{2}(G) \leq Z(\Phi(G))$, $\Phi(G)=\gamma_{2}(G)$ and $Z_{2}(G) = \gamma_{c-1}(G) \leq \gamma_{3}(G)$. Thus this allows us to prove that the automorphism $\phi$ has order $p$. In fact,  
$\phi^{p}(g)=\phi^{p-1}(g\delta'(g))=\phi^{p-1}(g) \phi^{p-1}(\delta'(g))=\phi^{p-1}(g) \delta'(g)=\phi^{p-2}(g\delta'(g))\delta'(g)=\cdots= g (\delta'(g))^{p}=g$, for all $g \in G$.

\noindent

Repeating this previous construction we produce a set of automorphisms of $G$ of order $p$, whose size is equal to $|\Omega_{1}(Z_{2}(G))|^{2}$, in other words, whose size is the number of possible choices for the images of the above generators. Next we distinguish the only two possible cases: $Z_{2}(G) \cong C_{p} \times C_{p} \times C_{p}$ or  $Z_{2}(G) \cong C_{p^{2}} \times C_{p}$. In the former case, when  $Z_{2}(G) \cong C_{p} \times C_{p} \times C_{p}$, we can produce $p^{6}$ automorphisms of $G$ of order $p$. However, the number of inner automorphisms of $G$ induced by elements of $Z_{3}(G)$ is at most $p^{4}$.  Thus, a simple counting argument is enough to say that $G$ has a non-inner automorphism of $G$ of order $p$, and we get the statement of the Theorem in this case. Otherwise, in the latter case, when $Z_{2}(G) \cong C_{p^{2}} \times C_{p}$, let us choose an assignment $x \rightarrow u, y \rightarrow v $ such that $u,v \in \Omega_{1}(Z_{2}(G)) - \{1\}$, $u$ is n
 ot central, and
  let $\phi$ be the automorphism of $G$ of order $p$ obtained by this assignment. On the other hand, we know that $\phi$ is inner if and only if there exists an element $h_{\phi}$ of $Z_{3}(G) - Z_{2}(G)$ such that $h_{\phi}^{p} \in Z(G)$, $\phi(g)=g^{h_{\phi}}$ and $\delta'(g)=[g,h_{\phi}]$ for all $g \in G$. Under these circumstances, since $Z(G) \leq \Omega_{1}(Z_{2}(G))$ we deduce that  $[h_{\phi}, G]Z(G) \leq \Omega_{1}(Z_{2}(G))  < Z_{2}(G)$,  which is in contradiction with the analogous coverty property of Proposition \ref{bumi3.3}. Consequently, this second case does not happen and the statement of the Theorem is proved.
\end{proof}

\section{Berkovich's conjecture for normally constrained $p$-groups}

In this section we attack Berkovich's conjecture in normally constrained $p$-groups, for groups with more than $2$ generators. In terms of the following remark, let us see that we can deal with normally constrained  $p$-groups with cyclic center.

\begin{rmk} \label{secondcenter}
As we have established in the previous section, we are dealing with finite $p$-groups $G$ such that $d(Z_{2}(G)/Z(G)) = d(G)d(Z(G))$, so we can assume by Theorem \ref{bumi3.5} that the center of $G$ is cyclic of order $p$ (otherwise the quotient $Z_{2}(G)/Z(G)$ would contradict that above theorem) and that $d(Z_{2}(G)/Z(G)) = d(G)$. Moreover, from \cite{deaconescu:2002} we can assume as well that the second center $Z_{2}(G)$ is abelian, since otherwise the problem is already solved. 
\end{rmk}

\begin{thm}
\label{cons}
Let $G$ be a normally constrained finite $p$-group generated by more than two elements. Then $G$ has a non-inner automorphism of order $p$.
\begin{proof}
Let $G$ be a normally constrained $p$-group $d$-generated by the elements $x_{1},\ldots,x_{d}$, with $d\geq 3$. Thus according to previous Remark \ref{secondcenter}, the elementary abelian quotient group $Z_{2}(G)/Z(G)$   is generated by $d$ elements, and $Z_{2}(G)$ is an elementary abelian subgroup of $G$ generated by $d+1$ elements.

Let us note that the assignment $ x\Phi(G) \mapsto [\cdot, x] $ defines an injective homomorphism $\phi$ from $G/\Phi(G)$ into $Hom(Z_{2}(G)/Z(G),Z(G))$, which is actually an isomorphism since $Z_{2}(G)/Z(G)$ is generated by $d$ elements. Indeed the elements $\phi(x_{i})$, where $i=1, \ldots, d$, form a basis of the dual space of $Z_{2}(G)/Z(G)$. This implies that the intersection of the kernels of $d-1$ of such maps is a vector space one dimensional. Moreover, the order of $K:=\bigcap_{i=2}^{d}C_{Z_{2}(G)}(x_{i})$ is exactly $p^{2}$. Now let us take $u_{1} \in K - Z(G)$ and let us define the following assignments: 

\[ \delta = \left\{ \begin{array}{ll}
         x_{1} \rightarrow u_{1} \\
         x_{i}=1 \ for \ all \ i \geq 2 \\ 
         \end{array} \right. \]

This $\delta$ map is a derivation from the $d$-generator free group $F_{d}$ to the $F_{d}$-module $K$, and, arguing as in Theorem \ref{main}, this derivation induces a new derivation from $$G/\Phi(G) = \langle x_{1},\ldots, x_{d} \ | \ x_{1}^{p}, \ldots, x_{d}^{p}, [x_{i},x_{j}] \ for \ all \ i,j \rangle $$ to $Z_{2}(G)$, since for all $i,j\in \{1, \ldots,d\}$ 
$$\delta(x_{i}^{p})=\delta(x_{i})^{p}[\delta(x_{i}),x_{i}]^{\binom{p}{2}} =1$$ and $$\delta([x_{i},x_{j}])=[x_{i},\delta(x_{j})][\delta(x_{i}),x_{j}]=1.$$ 
Let us note that the previous equalities hold by definition of $K$. 
Next, since $K \leq Z(\Phi(G))$, this obtained derivation extends by Lemma \ref{lift} to an automorphism $\phi$, defined by $\phi(g)=g\delta(g)$ for all $g \in G$, whose order is $p$ and that leaves the $Frattini$ subgroup elementwise fixed. Moreover, if there exists $h \in Z_{3}(G)-Z_{2}(G)$ such that $\phi(g)=g^{h}=g[g,h]$ for all $g \in G$, then it would deduce that $[h,G]=K$, but this jointly with the fact that $Z(G) < K < Z_{2}(G)$ contradicts the covering property of $G$, listed in Proposition \ref{bumi3.3}. As a consequence, the obtained automorphism of order $p$ is non-inner, as desired.
\end{proof}
\end{thm}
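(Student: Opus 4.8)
The plan is to mimic the structure of the proof of Theorem \ref{main}, transferring the derivation-then-lift machinery from the two-generator (thin) case to the $d$-generator case with $d\geq 3$. First I would dispose of the small-class cases: by Liebeck, Abdollahi, and Abdollahi--Ghoraishi--Wilkens, the conjecture is already known for $p$-groups of class at most $3$, so I may assume $c(G)\geq 4$; and by Deaconescu I may assume $G$ is strongly Frattinian, while by Abdollahi's result on $d(Z_2(G)/Z(G))$ I may assume $d(Z_2(G)/Z(G))=d(G)d(Z(G))$. Combined with Theorem \ref{bumi3.5}, Corollary \ref{bumi3.4} and Proposition \ref{bumi3.1}, Remark \ref{secondcenter} then gives the structural facts I need: $Z(G)\cong C_p$, $d(Z_2(G)/Z(G))=d$, $Z_2(G)$ is elementary abelian of rank $d+1$, the upper and lower central series coincide, and $Z_2(G)=\gamma_{c-1}(G)\leq Z(\Phi(G))$ with $[Z_2(G),\gamma_2(G)]=1$.

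The key linear-algebra observation is that the map $x\Phi(G)\mapsto[\,\cdot\,,x]$ sending a coset to the commutator-induced homomorphism $Z_2(G)/Z(G)\to Z(G)$ is an isomorphism $G/\Phi(G)\xrightarrow{\ \sim\ }\mathrm{Hom}(Z_2(G)/Z(G),Z(G))$; this is where $d\geq 3$ is used, since it forces $d(Z_2(G)/Z(G))=d(G/\Phi(G))$ so a surjection between equidimensional $\F_p$-spaces is an isomorphism. Viewing $\phi(x_1),\dots,\phi(x_d)$ as a basis of the dual space of $Z_2(G)/Z(G)$, I intersect the kernels of $\phi(x_2),\dots,\phi(x_d)$ to get a two-dimensional subspace, i.e. a subgroup $K=\bigcap_{i=2}^d C_{Z_2(G)}(x_i)$ of order $p^2$ with $Z(G)<K<Z_2(G)$. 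Choosing $u_1\in K\setminus Z(G)$, I define on the free group $F_d$ the function $x_1\mapsto u_1$, $x_i\mapsto 1$ for $i\geq 2$, which by Lemma \ref{determineunique} extends to a derivation $\delta\colon F_d\to K$. Because $u_1\in K$, by construction $[\delta(x_i),x_j]=1$ and $[x_i,\delta(x_j)]=1$ for all $i,j$ (the interesting case being $i=1$, handled by $u_1\in C_{Z_2(G)}(x_j)$ for $j\geq 2$ and by $u_1\in Z_2(G)$, hence $[u_1,x_1]\in Z(G)$... more precisely one checks $\delta([x_i,x_j])=1$ directly), and by Lemma \ref{free}(i) together with $K$ having exponent $p$ one gets $\delta(x_i^p)=1$; hence $\delta$ kills all the defining relators of the presentation $G/\Phi(G)=\langle x_1,\dots,x_d\mid x_i^p,\,[x_i,x_j]\rangle$, so by Lemma \ref{Der} it induces a derivation on $G/\Phi(G)$ into $K\leq Z_2(G)$, which pulls back to a derivation $\delta'\colon G\to K$ with $\delta'(\Phi(G))=1$. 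Since $K\leq Z(\Phi(G))$, Lemma \ref{lift} turns $\delta'$ into an automorphism $\phi$ of $G$ fixing $\Phi(G)$ pointwise, and the same telescoping computation as in Theorem \ref{main} (using $\delta'(g)^p=1$ and that $\phi$ fixes $\mathrm{im}\,\delta'\subseteq\Phi(G)$) shows $\phi^p=1$.

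Finally I would argue $\phi$ is non-inner. If $\phi=\mathrm{conj}_h$ for some $h\in G$, then since $\phi$ fixes $\Phi(G)$ pointwise we get $h\in C_G(\Phi(G))=Z(\Phi(G))$, and since $\phi$ acts on $G/Z_2(G)$... more directly, $[g,h]=\delta'(g)\in Z(G)$ for all $g$, so $h\in Z_3(G)$, and $h\notin Z_2(G)$ since $\delta'\neq 1$; thus $h\in Z_3(G)\setminus Z_2(G)$ with $[h,G]=\delta'(G)$. The image $\delta'(G)$ generates (as its span) a subgroup between $Z(G)$ and $Z_2(G)$ contained in $K$, so $Z(G)<[h,G]\leq K<Z_2(G)$; but $h\in Z_3(G)\setminus Z_2(G)=\gamma_{c-2}(G)\setminus\gamma_{c-1}(G)$, so the covering property of Proposition \ref{bumi3.3} (valid for the upper central series by Remark \ref{structurethin}, or directly since the two series coincide) forces $[h,G]\gamma_c(G)=[h,G]=Z_2(G)$, a contradiction. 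Hence $\phi$ is a non-inner automorphism of order $p$. The main obstacle I anticipate is not any single step but making the presentation manipulation and the verification $\delta'(g)^p=1$ airtight: one must be careful that $\delta'$ genuinely takes values in $K$ (not merely in $Z_2(G)$) so that the covering-property contradiction applies to $[h,G]\leq K<Z_2(G)$, and careful that the isomorphism $G/\Phi(G)\cong\mathrm{Hom}(Z_2(G)/Z(G),Z(G))$ really does need $d\geq 3$ here only through the dimension count — for $d=2$ the intersection of $d-1=1$ kernels would already be the whole codimension-one space and the argument would degenerate, which is exactly why the two-generator case needed the separate, more delicate treatment of Theorem \ref{main}.
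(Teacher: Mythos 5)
Your proposal is correct and follows essentially the same route as the paper: the same isomorphism $G/\Phi(G)\cong\mathrm{Hom}(Z_{2}(G)/Z(G),Z(G))$, the same order-$p^{2}$ subgroup $K=\bigcap_{i=2}^{d}C_{Z_{2}(G)}(x_{i})$, the same derivation $x_{1}\mapsto u_{1}$, $x_{i}\mapsto 1$ lifted via Lemmas \ref{Der} and \ref{lift}, and the same contradiction with the covering property of Proposition \ref{bumi3.3} to rule out innerness. The minor refinements you add (e.g. noting $[h,G]\leq K$ rather than $[h,G]=K$, and the remark on why $d\geq 3$ matters for the dimension count) are consistent with the paper's argument.
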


\begin{rmk}
Let us note that the proof of the above Theorem \ref{cons} actually works for two generator groups as well. However, it has been a wish of the authors to present to the reader two different ways of producing derivations and solving the problem in the case of two generator normally constrained-$p$ groups, as it has been done in Theorems \ref{main} and \ref{cons}, respectively.
\end{rmk}
 
\begin{rmk}
In conclusion, for every odd prime number $p$, normally constrained $p$-groups have at least a non-inner automorphism of order $p$. Moreover, since thin $2$-groups are of maximal class, it is well known that they have a non-inner automorphism of order $2$. But what happens in the case of normally constrained $2$-groups with more than two generators and nilpotency class greater than or equal to $4$? The existing bibliography give us neither examples nor results about such groups.
\end{rmk}

\section{Acknowledgements}
The third author would like to thank the Department of Mathematics at the University of the Basque Country for its excellent hospitality while part of this paper was being written; he also wish to thank Professors Gustavo A. Fern\'andez Alcober and Carlo Maria Scoppola for their suggestions.

\bibliography{articles}

\bigskip
\bigskip

\end{document}